\documentclass{amsart}
\usepackage{amsmath,amssymb,amsxtra,amsthm,amscd,verbatim}
\usepackage{lmodern}
\usepackage[all,cmtip]{xy}
\usepackage{rotating}
\usepackage{microtype}
\usepackage{todonotes}

\frenchspacing

\usepackage{listings}
\lstset{
basicstyle=\small\ttfamily,
columns=flexible,
breaklines=true
}

%\usepackage[bookmarks, colorlinks, breaklinks]{hyperref} 
%\hypersetup{linkcolor=blue,citecolor=blue,filecolor=black,urlcolor=MidnightBlue} % Link colors

%\swapnumbers
\theoremstyle{plain}
\newtheorem{Thm}{Theorem}[section]

\newtheorem{Cor}[Thm]{Corollary}
\newtheorem{Lem}[Thm]{Lemma}
\newtheorem{Prop}[Thm]{Proposition}

\theoremstyle{definition}
\newtheorem{Def}[Thm]{Definition}

\newtheorem{Ex}[Thm]{Example}
\newtheorem{Rem}[Thm]{Remark}

\theoremstyle{remark}
\newtheorem{Not}[Thm]{Notation}

% MATH OPERATORS:

\DeclareMathOperator{\fil}{fil}

\DeclareMathOperator{\Z}{\mathbf{Z}}

\DeclareMathOperator*{\holimprep}{holim}                       % holim
\newcommand{\holim}[1]%
{\displaystyle\holimprep_{\substack{\leftarrow \pull - \\ #1}} \,
}

% MATH OPERATORS FROM CARTAN.TEX:

% DISPLAYED COLIMITS WITH ARROWS
\newcommand{\pull}%        % extension for arrows
{\!\!\! -\!\!\! -\!\!\! -\!\!\!}
\DeclareMathOperator*{\hocolimprep}{hocolim}
\newcommand{\hocolim}[1]%
{\hocolimprep_{\substack{- \pull \rightarrow \\ #1}} \, }
\DeclareMathOperator*{\colimprep}{colim}
\newcommand{\colim}[1]%
{\colimprep_{\substack{- \pull \rightarrow \\ #1}} \, }

% EMPHASIS FORMAT FOR DEFINED TERMS:

% FOR REPEATED REFERENCES TO THE SAME AUTHOR(S) IN BIBLIOGRAPHY:

% MACRO FOR LABELS:
\newif\ifShowLabels
\ShowLabelstrue
\newcommand{\TeXref}[1]{
\marginpar{\scriptsize \texttt{#1}}}

% SECTION TITLES THAT CAN BE REFERENCED:
\newcommand{\SecRef}[2]{\section{#1}\label{S:#2}%
\ifShowLabels \TeXref{{S:#2}} \fi}

% REFERENCING SECTIONS AND DECLARATIONS:

\newcommand{\refT}[1]{\textup{\ref{T:#1}}}
\newcommand{\refL}[1]{\textup{\ref{L:#1}}}

% NEW ENVIRONMENTS FOR DECLARATIONS THAT CAN BE REFERENCED:
\newenvironment{ThmRef}[1]%
{ \begin{Thm} \label{T:#1}
\ifShowLabels \TeXref{T:#1} \fi }%
{ \end{Thm} }
\newenvironment{DefRef}[1]%
{ \begin{Def} \label{D:#1}
\ifShowLabels \TeXref{D:#1} \fi }%
{ \end{Def} }
\newenvironment{LemRef}[1]%
{ \begin{Lem} \label{L:#1}
\ifShowLabels \TeXref{L:#1} \fi }%
{ \end{Lem} }
\newenvironment{CorRef}[1]%
{ \begin{Cor} \label{C:#1}
\ifShowLabels \TeXref{C:#1} \fi }%
{ \end{Cor} }
{ \begin{Rem} \label{R:#1}
\ifShowLabels \TeXref{R:#1} \fi }%
{ \end{Rem} }
{ \begin{Prop} \label{P:#1}
\ifShowLabels \TeXref{P:#1} \fi }%
{ \end{Prop} }
\newenvironment{ExRef}[1]%
{ \begin{Ex} \label{E:#1}
\ifShowLabels \TeXref{E:#1} \fi }%
{ \end{Ex} }
{ \begin{Not} \label{N:#1}
\ifShowLabels \TeXref{N:#1} \fi }%
{ \end{Not} }

% NEW ENVIRONMENTS FOR DECLARATIONS WITH CAPTIONS THAT CAN BE REFERENCED:
\newenvironment{ThmRefName}[2]%
{ \begin{Thm} [#2]\label{T:#1}
\ifShowLabels \TeXref{T:#1} \fi }%
{ \end{Thm} }
\newenvironment{DefRefName}[2]%
{ \begin{Def} [#2]\label{D:#1}
\ifShowLabels \TeXref{D:#1} \fi }%
{ \end{Def} }
{ \begin{Lem} [#2]\label{L:#1}
\ifShowLabels \TeXref{L:#1} \fi }%
{ \end{Lem} }
{ \begin{Cor} [#2]\label{C:#1}
\ifShowLabels \TeXref{C:#1} \fi }%
{ \end{Cor} }
{ \begin{Rem} [#2]\label{R:#1}
\ifShowLabels \TeXref{R:#1} \fi }%
{ \end{Rem} }
{ \begin{Prop} [#2]\label{P:#1}
\ifShowLabels \TeXref{P:#1} \fi }%
{ \end{Prop} }
{ \begin{Ex} [#2]\label{E:#1}
\ifShowLabels \TeXref{E:#1} \fi  }%
{ \end{Ex} }

%%Jonathan's New Commands (for compatibility)

\newcommand{\coloneq}{:=}
\newcommand{\calE}{\mathcal{E}}

\newcommand{\inv}{^{-1}}
\newcommand{\N}{\mathbb{N}}

\usepackage{hyperref}

% THE FOLLOWING LINE SUPPRESSES ALL MARGINAL LABELS:
\ShowLabelsfalse

\begin{document}

\title[Colimit theorems for coarse coherence]{Colimit theorems for coarse coherence with applications}
\author[Boris Goldfarb]{Boris Goldfarb}
\author[Jonathan L. Grossman]{Jonathan L. Grossman}
\address{Department of Mathematics and Statistics\\ SUNY\\ Albany\\ NY 12222}
\date{\today}

\begin{abstract}
    We establish two versions of a central theorem, the Family Colimit Theorem, for the coarse coherence property of metric spaces.  This is a coarse geometric property and so is well-defined for finitely generated groups with word metrics.  It is known that coarse coherence of the fundamental group has important implications for classification of high-dimensional manifolds. The Family Colimit Theorem is one of the permanence theorems that give structure to the class of coarsely coherent groups.  In fact, all known permanence theorems follow from the Family Colimit Theorem.  We also use this theorem to construct new groups from this class.
\end{abstract}

\maketitle

\section{Introduction}

Let $\Gamma$ be a finitely generated group.  
Coarse coherence is a property of the group viewed as a metric space with a word metric, defined in terms of algebraic properties invariant under quasi-isometries.
It has emerged in \cite{gCbG:04,bGjG:18} as a condition that guarantees an equivalence between the $K$-theory of a group ring $K(R[\Gamma])$ and its $G$-theory $G(R[\Gamma])$.
The $K$-theory is the central home for obstructions to a number of constructions in geometric topology and number theory.  We view $K(R[\Gamma])$ as a non-connective spectrum associated to a commutative ring $R$ and the group $\Gamma$ whose stable homotopy groups are the Quillen $K$-groups in non-negative dimensions and the negative $K$-groups of Bass in negative dimensions.  The corresponding $G$-theory, introduced in this generality in \cite{gCbG:16}, is well-known as a theory with better computational tools available compared to $K$-theory. 

The two spectra are related by the so-called Cartan map $K(R[\Gamma]) \to G(R[\Gamma])$.  It is this map that is shown in \cite[Theorem 4.10]{bGjG:18} to be an equivalence if the group $\Gamma$ is coarsely coherent and belongs to Kropholler's hierarchy LH$\mathcal{F}$, and if $R$ is a regular Noetherian ring of finite global dimension. The algebraic conditions on the ring are satisfied in the most important for applications cases of $R$ being either the ring of integers or a field.

In order to give precise definitions and state our theorems, we need to review some background from coarse geometry and controlled algebra.

A map between metric spaces $f \colon X \to  Y$ is called \textit{uniformly expansive} if there is a function $c \colon [0, \infty) \to [0, \infty)$ such that $d_Y (f(x_1), f(x_2)) \le c (d_X (x_1, x_2))$ for all pairs of points $x_1$, $x_2$ from $X$.  
Two functions $h_1$, $h_2 \colon X \to Y$ between metric space are \textit{close} if there is a constant $C \ge 0$ so that $d_Y (h_1 (x), h_2(x)) \le C$ for all choices of $x$ in $X$.
A function $k \colon X \to Y$ is a \textit{coarse equivalence} if it is uniformly expansive and there exists a uniformly expansive function $l \colon Y \to X$ so that the compositions $k \circ l$ and $l \circ k$ are close to the identity maps.  In this case, we say that $k$ and $l$ are coarse inverses to each other.  If both obey the inequalities above for a function $c$, we say that $c$ is a \textit{control function} for both.

An example of a coarse equivalence is the notion of quasi-isometry.  This is simply a coarse equivalence $k$ for which the uniformly expansive functions for $k$ and its coarse inverse can be chosen to be linear polynomials.  In geometric group theory, it is very useful that any two choices for a finite generating set of a group produce quasi-isometric word metrics.
A map $f$ is a \textit{coarse embedding} if $k$ is uniformly expansive and is a coarse equivalence onto its image.  

Next we review some notions from controlled algebra related to $R$-modules filtered by the subsets of a metric space.  They are equivalent to the definitions given in \cite[section 2]{bGjG:18} and \cite[section 3]{gCbG:19}.
For the purposes of this paper it will be convenient to restate the definitions in terms of elements.

Let $X$ be a proper metric space.  An $X$\textit{-filtered} $R$\textit{-module} is a module $F$ together with a covariant functor $\mathcal{P}(X) \to \mathcal{I}(F)$
from the set of subsets of $X$ to the family of $R$-submodules of $F$, both ordered by inclusion, such that
the value on $X$ is $F$.
It is convenient to think of $F$ as the functor above and use notation $F(S)$ for the value of the functor on a subset $S$.
We will assume $F$ is \textit{reduced} in the sense that $F(\emptyset)=0$.

An $R$-linear homomorphism $f \colon F \to G$ of $X$-filtered modules is \textit{boundedly controlled} if there is a fixed number $b \ge 0$ such that for all subsets $S$ of $X$, if $x$ is in $F(S)$, its image $f (x)$ is an element of $G (S [b])$.
It is called \textit{boundedly bicontrolled} if there exists a number $b \ge 0$ such that, in addition to the property above, for all subsets $S \subset X$, if $y$ is in the image of $f$ and also in $G (S)$, it is the image $f (x)$ is some element $x$ from $F (S)$.
In this case we will say that $f$ has
\textit{filtration degree} $b$ and write $\fil (f) \le b$.

There are a few properties of filtered modules that we want to consider.

\begin{DefRef}{HYUT}
Let $F$ be an $X$-filtered $R$-module.
\begin{itemize}
\item $F$ is called \textit{lean} or $D$-\textit{lean} if there is a number $D \ge 0$ such that for for every subset $S$ of $X$ and any element $u$ in $F(S)$,
we have a decomposition $u = \sum_{x \in S} u_x$, where each $u_x$ is from $F(x[D])$ and only finitely many $u_x$ are non-zero.
\item $F$ is called \textit{scattered} or $D'$-\textit{scattered} if there is a number $D' \ge 0$ such that for any element $u$ in $F$, we have $u = \sum_{x \in X} u_x$, where $u_x$ is from $F(x[D])$ and only finitely many $u_x$ are non-zero..
\item $F$ is called \textit{insular} or $d$-\textit{insular} if there is a
number $d \ge 0$ such that any element $u$ simultaneously in $F(S)$ and in $F(T)$ is in fact an element of $F(S[d] \cap T[d])$.
\item $F$ is \textit{locally finitely generated} if $F(S)$ is a finitely generated $R$-submodule for all bounded subsets $S$.
\end{itemize}
\end{DefRef}

We will assume that all filtered modules are locally finitely generated.

Clearly, being lean is a stronger property than being scattered.  We refer the reader to some interesting examples of non-projective lean, insular filtered modules in \cite[Example 4.2]{gCbG:16}.

\begin{DefRefName}{CCoherence}{Coarse Coherence}
A metric space $X$ is \textit{coarsely coherent} if in any exact sequence 
\[
0 \to E' \xrightarrow{\ f \ } E \xrightarrow{\ g \ } E'' \to 0
\]
of $X$-filtered $R$-modules where $f$ and $g$ are both bicontrolled maps, the combination of $E$ being lean and $E''$ being insular implies that $E'$ is necessarily scattered.  
\end{DefRefName}

There is the following relaxation of the coarse coherence property.

\begin{DefRefName}{WCCoherence}{Weak Coarse Coherence}
A metric space $X$ is \textit{weakly coarsely coherent} if in any exact sequence 
\[
0 \to E' \xrightarrow{\ f \ } E \xrightarrow{\ g \ } E'' \to 0
\]
of $X$-filtered $R$-modules where $f$ and $g$ are both bicontrolled maps, the combination of $E$ being both lean and insular and $E''$ being insular implies that $E'$ is necessarily scattered.  
\end{DefRefName}

The fact that groups of finite asymptotic dimension are coarsely coherent was shown in \cite{gCbG:04}.  This was generalised to groups of countable asymptotic dimension in \cite{bG:20}.
The class of coarsely coherent groups has lots of permanence properties, such as invariance under very general extensions, products, etc. established by the authors in \cite{bGjG:18}, so there is a lot of structure already known in this class of groups. 

In this paper we prove an additional permanence property, the Family Colimit Theorem. There are two versions of this theorem.  One is a genuine permanence theorem for coarse coherence which holds under some geometric assumptions on the metric space.  The other is a general theorem with a slightly weaker conclusion.  Even though the conclusion is weaker, it has the same algebraic consequences in $K$-theory and so is as useful in that respect.  

In order to state our theorems, we formulate a version of coarse coherence for metric families.  A \textit{metric family} $\{ X_{\alpha} \}$ is simply a collection of metric spaces $X_{\alpha}$. 

\begin{DefRefName}{CoarseCoherenceFam}{Coarse Coherence for Families}
A metric family $\{ X_{\alpha} \}$ is \textit{coarsely coherent} if for a collection of exact sequences 
\[
0 \to E'_{\alpha} \xrightarrow{\ f_{\alpha} \ } E_{\alpha} \xrightarrow{\ g_{\alpha} \ } E''_{\alpha} \to 0
\]
of $X_{\alpha}$-filtered $R$-modules where all $E_{\alpha}$ are $D$-lean, all $E''_{\alpha}$ are $d$-insular, all $f_{\alpha}$ and $g_{\alpha}$ are all $b$-bicontrolled maps for some fixed constants $D$, $d$, $b \ge 0$, it follows that all $E'_{\alpha}$ are $\partial$-\textit{scattered} for some uniform constant $\partial \ge 0$.
\end{DefRefName}

The assumption that a family is coarsely coherent is certainly stronger than each space in the family being coarsely coherent.

Recall that a diagram of objects in a category $\mathcal{C}$ is a small subcategory in $\mathcal{C}$.

\begin{ThmRefName}{MJAXC}{The Family Colimit Theorem}
Suppose $\{ X_{\alpha} \}$ is a collection of metric spaces that is coarsely coherent as a family and $\mathcal{D}$ is a diagram of metric spaces where the nodes are members of $\{ X_{\alpha} \}$ and all of the structure maps are isometric embeddings.  Let $X$ be the colimit of $\mathcal{D}$. Then 
\begin{enumerate}
\item $X$ is weakly coarsely coherent,
\item $X$ is coarsely coherent if there are distance-nonincreasing functions $\tau_{\alpha} \colon X \to X_{\alpha}$ such that $d (x,\tau(x)) \le d(x,X_{\alpha}) + \varepsilon_{\alpha}$ holds for all $x$ in $X$ and some number $\varepsilon_{\alpha} \ge 0$.  
\end{enumerate}
\end{ThmRefName}

We prove the theorem in section 2.
In section 3, we will apply the theorem to argue that the wreath product $\Z \wr \Z$ is coarsely coherent. In section 4, we collect some consequences of the Family Colimit Theorem. The theorem allows us to construct two new families of coarsely coherent groups. In addition, we include a summary statement of known permanence properties for coarse coherence. 

\section{Colimit Theorems}

The proof of the Family Colimit Theorem will require several facts about modules filtered over the given family of metric spaces.

For any filtered module $F$ and a choice of number $D \ge 0$, there is new filtration on $F$ given by the formula $\widetilde{F}_D (S) = \sum_{x \in S} F(x[D])$.  More generally, $\widetilde{M}_D (S) = \sum_{x \in S} M \cap F(x[D])$ gives an $X$-filtration for a submodule $M$ of $F$.

\begin{LemRef}{JASK2}
The filtration $\widetilde{M}_D$ is always lean, for any submodule $M$ and for any value of $D \ge 0$.  If $F$ itself is $D$-lean then there is an interleaving between $F$ and $\widetilde{F}_D$ in the following sense: for all subsets $S$, one has $\widetilde{F}_D (S) \subset F(S[D])$ and $F(S) \subset \widetilde{F}_D (S)$.
\end{LemRef}

\begin{proof}
The equality
\[
\widetilde{M}_D (S) = \sum_{x \in S} M \cap F(x[D]) = \sum_{x \in S} \widetilde{M}_D (x)
\]
shows that $\widetilde{M}_D$ is always $0$-lean.
The first interleaving inclusion follows from the inclusion $x[D] \subset S[D]$ for $x \in S$.  The other is the $D$-leanness property of $F$.
\end{proof}

Let $X'$ be a metric subspace of $X$ and let $F$ be an $X$-filtered module.  For any function $\tau \colon X \to X'$, we will assign an $X'$-filtration to $F$, which we now denote $F_{\tau}$ in order to distinguish from the $X$-filtration.  As modules, $F(X) = F_{\tau}(X')$.  The $X'$-filtration is given by the formula $F_{\tau} (S) = F (\tau^{-1} (S))$.

\begin{LemRef}{JASK11}
For any metric pair $X' \subset X$ there is a function $\tau$ such that whenever $F (X) = F(X'[B])$ for some $B \ge 0$ then 
\begin{enumerate}
\item $F_{\tau}$ is lean if $F$ is lean and insular, and
\item $F_{\tau}$ is insular if $F$ is insular.
\end{enumerate}
\end{LemRef}

\begin{proof}
Choose a function $\tau \colon X \to X'$ such that $d (x, \tau (x)) \le d(x, X') + \varepsilon$ for a uniform choice of a number $\varepsilon \ge 0$.  On $X'[B]$, $\tau$ is bounded by $B + \varepsilon$. 

Suppose $F$ is $D$-lean and $d$-insular.  From the definition, $F_{\tau} (S) = F (\tau^{-1} (S))$.  From the assumption on $F$, $F_{\tau} (S) \subset F(X'[B])$.  The insularity property of $F$ gives 
\[
F_{\tau} (S) \subset F(\tau^{-1} (S)[d] \cap X'[B+d]).  
\]
Every point $x$ in the set $W = \tau^{-1} (S)[d] \cap X'[B+d]$ is within $d$ from a point $\overline{x}$ in $\tau^{-1} (S) \cap X'[B+2d]$.  
This means every point $y$ in $x[D]$ is within $d+D$ from $\overline{x}$. 
Let $x' = \tau (\overline{x})$, which is a point in $S$.  We know that $d(x', \overline{x}) \le B + 2d + \varepsilon$. Now we can estimate 
\begin{align}
d(x',\tau(x)) &\le d(x', \overline{x}) + d(\overline{x}, y) + d(y, \tau(y)) \notag \\ &\le (B+2d + \varepsilon) + (d + D) + (B+d + D + \varepsilon). \notag
\end{align}
This means $y$ is contained in $\tau^{-1}(x'[2B+4d + 2D+2\varepsilon])$, so the metric ball $x[D]$ in $X$ is contained in $\tau^{-1}(x'[2B+4d + 2D+2\varepsilon])$.  So we finally have 
\begin{multline}
F_{\tau} (S) \subset F(\tau^{-1} (S)[d] \cap X'[B+d]) \\ \subset \sum_{x \in W} F(x[D]) \subset \sum_{x' \in S} F_{\tau} (x'[2B+4d + 2D+2\varepsilon]). \notag
\end{multline}
This shows $F_{\tau}$ is $(2B+4d + 2D+2\varepsilon)$-lean.  

Insularity follows from a similar estimate.  Suppose $S$ and $T$ are subsets of $X'$.  Then using $d$-insularity of $F$, we have $F_{\tau} (S) \cap F_{\tau} (T) \subset F(\tau^{-1} (S)[d] \cap \tau^{-1} (T)[d])$.  We also assume $F \subset F(X' [B])$, so 
\begin{equation}
F_{\tau} (S) \cap F_{\tau} (T) \subset F(\tau^{-1} (S)[2d] \cap \tau^{-1} (T)[2d] \cap X'[B+d]).
\tag{$\ast$}
\end{equation}
Take a point $x$ in $\tau^{-1} (S)[2d] \cap \tau^{-1} (T)[2d] \cap X'[B+d]$, then there is $\overline{x}$ in $\tau^{-1} (S)$ such that $d(x,\overline{x}) \le 2d$.  We know that $d(x, \tau(x)) \le B+d+\varepsilon$, because $x$ is in $X'[B+d]$, and $d(\tau(\overline{x}),\overline{x}) \le (B+d)+2d+\varepsilon=B+3d+\varepsilon$. Now 
\begin{align}
d(\tau(\overline{x}),\tau(x)) &\le d(\tau(\overline{x}), \overline{x}) + d(\overline{x}, x) + d(x, \tau(x)) \notag \\ &\le (B+3d+\varepsilon) + 2d + (B+d+\varepsilon) = 2B + 6d+2\varepsilon. \notag
\end{align}
Let us use $d'$ for this constant $2B + 6d+2\varepsilon$.  The last inequality means that $x$ is in $\tau^{-1} (S[d'] \cap X')$ because $\tau(\overline{x})$ is in $S$.  Using the same estimates applied with respect to $T$ rather than $S$, we have that $x$ is also in $\tau^{-1} (T[d'] \cap X')$, and so in $\tau^{-1} (S[d'] \cap T[d'] \cap X')$.
This means that from $(\ast)$ above 
\begin{equation}
F_{\tau} (S) \cap F_{\tau} (T) \subset F_{\tau} (S[d'] \cap T[d'] \cap X').
\notag
\end{equation}
We conclude that $F_{\tau}$ is $(2B + 6d+2\varepsilon)$-insular. 
\end{proof}

In a special geometric situation where there is a distance-nonincreasing function $\tau \colon X \to X'$, stronger facts can be proved more easily.  Our main applications in this paper will in fact be made in this kind of situation.

\begin{LemRef}{JASK22}
Suppose $X' \subset X$ is a metric pair with a distance-nonincreasing function $\tau \colon X \to X'$. 
Then  $F_{\tau}$ is lean if $F$ is lean and is insular if $F$ is insular.
\end{LemRef}

\begin{proof}
The property of $\tau$ guarantees that for any $x$ in $X$ and any $D \ge 0$, $x[D] \subset \tau^{-1} (\tau(x)[D])$.  So $F(x[D])$ is contained in $F_{\tau} (\tau(x)[D])$.  Now given $S \subset X'$, 
\[
F_{\tau} (S) = F(\tau^{-1} (S)) \subset \sum_{x \in \tau^{-1} (S)} F(x[D]) \subset \sum_{x' \in S} F_{\tau} (x'[D]).
\]
This shows $F_{\tau}$ is $D$-lean if $F$ is $D$-lean.  A very similar argument gives that $F_{\tau}$ is $d$-insular if $F$ is $d$-insular.
\end{proof}

These lemmas will allow us to show Theorem \refT{MJAXC}.  As the reader must be expecting, the special case (2) will have a more direct proof than (1).

\begin{proof}[Proof of Theorem \refT{MJAXC}]
(1) Let 
\begin{equation}
0 \to E' \xrightarrow{\ f \ } E \xrightarrow{\ g \ } E'' \to 0 \tag{$\dagger$}
\end{equation}
be a short exact sequence of $X$-filtered $R$-modules such that both $f$ and $g$ are $b$-bicontrolled, $E$ is $D$-lean, and both $E$ and $E''$ are $d$-insular. We will demonstrate that $E'$ is scattered. 

Let $k$ be an element of $E'$. As $E$ is $D$-lean, and in particular $D$-scattered, $f(k)$ is a finite sum $\sum_{x} k_x$ for some elements $k_x \in E (x[D])$.  Notice that the elements $k_x$ are not necessarily in the kernel of $g$.  However, if we denote by $C$ the finite set of all $x$ with non-zero $k_x$, this establishes that $f(k) \in E(C[D])$.  Let $\alpha$ be an index such that $f(k)$ is an element of $E (X_{\alpha})$.  It is therefore an element of a larger submodule $\mathcal{E} = \widetilde{E}_D (X_{\alpha})$ defined in Lemma \refL{JASK2}, where it is given a $0$-lean $X$-filtration.  

We define a new $X$-filtered module $\mathcal{E}''$ as the image of $\widetilde{E}_D (X_{\alpha})$ under the homomorphism $g$ with the canonical filtration $\mathcal{E}'' (S) = \mathcal{E}'' \cap E''(S)$. It is immediate that $\mathcal{E}''$ is $d$-insular because $E''$ is $d$-insular.  It is also $b$-lean as an image of a $0$-lean module under a boundedly controlled homomorphism.  Note also that since $\widetilde{E}_D (X_{\alpha}) \subset {E} (X_{\alpha}[D])$, we have $\mathcal{E}'' \subset E'' (X_{\alpha}[D+b])$.
Choose a function $\tau \colon X \to X_{\alpha}$ such that $d (x, \tau (x)) \le d(x, X_{\alpha}) + \varepsilon$ for a fixed number $\varepsilon \ge 0$.  Define an $X_{\alpha}$-filtration of the $X$-filtered submodule $\mathcal{E}''$ according to the formula $E''_{\alpha} (S) = \mathcal{E}'' (\tau^{-1} (S))$.  The $X_{\alpha}$-filtered module $E''_{\alpha}$ is lean and insular by Lemma \refL{JASK11}.  Similarly, the $X_{\alpha}$-filtration $E_{\alpha} (S) = \mathcal{E} (\tau^{-1} (S))$ of $\mathcal{E}$ is lean and insular.
Now $k$ is in the kernel of a bicontolled epimorphism $g \vert \colon E_{\alpha} \to E''_{\alpha}$ from a lean $X_{\alpha}$-filtered module to an insular $X_{\alpha}$-filtered module. The assumption that $X_{\alpha}$ is coarsely coherent gives a decomposition $k = \sum_x k_{\alpha,x}$ in the kernel of $g \vert$.  The supports of $k_{\alpha,x}$ depend only on the constants involved and so independent of the choice of $k$ itself and from $\alpha$ because of the family condition.  This shows that $E'$ is scattered.

(2) Now suppose $X$ possesses distance-nonincreasing functions $\tau_{\alpha}$ as in the statement, and we consider an exact sequence ($\dagger$) where $E$ is lean and $E''$ is insular. The same constructions as in part (1) allow to construct an exact sequence of $X_{\alpha}$-modules.  This time $E_{\alpha}$ and $E''_{\alpha}$ are, respectively, lean and insular by applying Lemma \refL{JASK22}.  The conclusion is the same: $E'$ is scattered.
\end{proof}

\SecRef{Coarse Coherence of \texorpdfstring{$\Z\wr\Z$}{Z wreath Z}}{Example}

This example illustrates the use of Family Colimit Theorems.  It is then easily generalized to other groups with subexponential dimension growth \cite{aD:06} in the next section.

Recall that the restricted wreath product of finitely generated groups $G$ and $H$, denoted $G \wr H$, is the semi-direct product $\bigoplus_{h\in H} G \rtimes_\theta H$. There is a bijection between the set of elements of $\bigoplus_{h \in H}G$ and the set of functions $f \colon H \to G$ of finite support given by identifying any element $(g_1,g_2,\ldots)$ of $\bigoplus_{h \in H} G$ with the assignment function $f_g:H \to G$ sending $h_1 \mapsto g_1, h_2 \mapsto g_2, \ldots$. The twisting action of the semi-direct product may then be described by $(\theta_h(f_g))(h') = f_g(h\inv h')$. Let $g = (g_1, g_2, \ldots ) \in \bigoplus_{h \in H} G$ correspond to the assignment function $f_g$, as above. Then 
\begin{equation*}
    (g, h) \cdot (g', h') = (f_g \circ \theta_h(f_{g'}), hh')
    = (f_g \circ f_{g'}(h\inv \rule{6pt}{.5pt}), hh') 
    = (g \Tilde{g}', hh')
\end{equation*}
where $\Tilde{g}' = (\Tilde{g}_1', \Tilde{g}_2',\ldots)$ is the permutation of $g'$ given by taking each $g_i'=f_{g'}(h_i)$ and replacing it with $\Tilde{g}_i'= f_{g'}(h\inv h_i)$. The wreath product $\bigoplus_{h \in H} G \rtimes_\theta H$ is generated by $\Sigma_G \times \{1_H\} \cup \{1_G\} \times \Sigma_H$, where $\Sigma_G,\Sigma_H$ are finite generating sets for $G$ and $H$, respectively. The wreath product is therefore finitely generated. 

We refer to the word metric associated to the generating set $\Sigma_G \times \{1_H\} \cup \{1_G\} \times \Sigma_H$ as the \textit{wreath metric} and denote it $d_\wr$. We now set forth some facts regarding a specific wreath product that we propose to study: $\Z \wr \Z$.

Consider $\Z \wr \Z$ generated by $\{\pm1\} \times \{1\} \cup \{1\} \times \{\pm1\}$. Let $\gamma$ denote the generator $1_G$ from the first factor $\Z$ in the wreath product, and $\sigma$ denote the generator $1_H$ from the copy of $\Z$ that is the second factor in the wreath product. 

Observe that
\begin{equation*}
    X_n \coloneq \langle\gamma^{k}\sigma\gamma^{-k} \vert k = 1, \ldots, n\rangle \cong \Z^n \leq \Z \wr \Z
\end{equation*}
and that
\begin{equation*}
    X_\infty \coloneq \langle\gamma^{k}\sigma\gamma^{-k} \vert k \in \N\rangle \cong \Z^\infty \leq \Z \wr \Z.
\end{equation*}
If we use $e_i$ to denote the product $\gamma^i \sigma \gamma^{-i}$, we obtain the usual basis-like generators for $\Z^n$ and $\Z^\infty$. Denote this standard word metric on $\Z^n = \langle e_i \vert i = 1, \ldots, n\rangle$ by $d_n$ and denote the word metric on $\Z^\infty = \langle e_i \vert i \in \N \rangle$ by $d_\infty$. Each $e_i$ has length $2i+1$ in $(\Z \wr \Z,d_\wr)$, as is clear from their definition, and the distance between any pair $e_i, e_j$ is $2(i+j+1)$ from 
\begin{multline*}
\quad \quad d_\wr(e_i,e_j) = \|e_i\inv e_j\| = \|\gamma^{-i}\sigma\inv \gamma^i \gamma^j \sigma \gamma^{-j}\| \\= \|\gamma^{-i}\sigma\inv \gamma^{i+j} \sigma \gamma^{-j}\| = 2(i+j+1). \quad \quad 
\end{multline*}
The benefit of employing the wreath metric $d_\wr$ inherited from $\Z \wr \Z$ rather than the metric given by the infinite generating set $\{e_i\}_{i \in \N}$ is that $(\Z^\infty,d_\wr)$ is a subgroup of a finitely generated group $(\Z \wr \Z,d_\wr)$, and thus both $(\Z^\infty, d_\wr)$ and $(\Z \wr \Z, d_\wr)$  are discrete, proper, bounded geometry metric spaces. 

Further, the isomorphism $\phi \colon (X_n,d_\wr) \to (\Z^n,d_n)$ is a coarse equivalence, since the word metric on both spaces yields for any $\omega \in X_n$,
\begin{equation*}
    \|\omega\|_{\wr} = \|\phi(\omega)\|_{n} + 2n.
\end{equation*}
Denote the inverse isomorphism $\psi:(\Z^n,d_n) \to (X_n,d_\wr)$.

\begin{LemRef}{Z Infinity is CC}
The subgroup $(\Z^\infty,d_\wr)$ of $(\Z\wr\Z,d_\wr)$ is coarsely coherent.
\end{LemRef}

\begin{proof}
Let
\begin{equation*}
    0 \to E' \xrightarrow{f} E \xrightarrow{g} E'' \to 0 
\end{equation*}
be a short exact sequence of $\Z^\infty$-filtered $R$-modules with $E$ $D$-lean, $E''$ $d$-insular, and $f,g$ $b$-boundedly bicontrolled. Without loss of generality, assume that all constants are integers.

Observe that for any nonnegative integer $k$, there exists a nonnegative integer $n_k$ such that the ball of radius $k$ about the identity element of $(\Z^\infty,d_\wr)$ is contained in $(\Z^{n_k},d_\wr)$, which we will denote by $X$. The decomposition of $\Z^\infty$ into the cosets of $X$ is $k$-disjoint. Choose $k=2D+2b+2d$.  Since we can write $\Z^\infty$ as the union of $(2D+2b+2d)$-disjoint sets, it follows from the main theorem of \cite{bG:20} that
\begin{equation*}
    E' \subseteq \sum_{z \in \Z^\infty} E'((z\cdot X)[D])= \sum_{z \in \Z^\infty} E'(z[D]\cdot X).
\end{equation*}
The family $\{z \cdot X\}_z$ is isometric to $X$, and thus is a coarsely coherent family. Consequently, $\{z[D]\cdot X\}_z$ is a coarsely coherent family, as is $\{z[D+b]\cdot X\}_z$.

We present an argument analogous to that in \cite{bGjG:18}. Define $K_z = z[D+b] \cdot X$ and consider the $D$-leanly constructed $K_z$-filtered module
\begin{equation*}
    E_{K_z,D} 
    = \sum_{x \in K_z} E(x[D])
\end{equation*}
where for any $S \subseteq K_z$,
\begin{equation*}
    E_{K_z,D}(S) 
    = \sum_{x \in S \cap K_z} E(x[D]).
\end{equation*}
Denote this module by $\calE_z$, and the submodule associated to $S$ by $\calE_z(S)$, for brevity. By design, $\calE_z$ is lean, contains $f(E'(z[D]\cdot X))$. Define $\calE_z'' = g(\calE_z)$ equipped with the standard submodule filtration from $E''$, which is also a $K_z$-filtration, and define $\calE_z'= f\inv(\calE_z)$ equipped with the standard submodule filtration as well. The resulting short exact sequence of $K_z$-filtered modules
\begin{equation*}
    0 \to \calE_z' \xrightarrow{f\vert_{\calE_z'}} \calE_z \xrightarrow{g\vert_{\calE_z}} \calE_z'' \to 0
\end{equation*}
satisfies the requirements on the modules and module homomorphisms that coarse coherence entails. Thus, $\calE_z'$ is $\delta$-scattered for some constant $\delta \geq 0$. Therefore,
\begin{equation*}
     E' 
     \subseteq \sum_{z \in \Z^\infty} E'(z[D]\cdot X)
     \subseteq \sum_{z \in \Z^\infty} \sum_{x \in K_z} \calE_z'(x[\delta])
\end{equation*}
\begin{equation*}
     = \sum_{x \in \Z^\infty} E'(x[\delta]) \cap E'(K_z)
     \subseteq \sum_{x \in \Z^\infty} E'(x[\delta])
     ,
\end{equation*}
and $(\Z^\infty,d_\wr)$ is coarsely coherent.
\end{proof}

This argument utilizes that $(\Z^n,d_\wr)$ is coarsely coherent for all $n$, but does not require that they be so as a family. However, since $(\Z^\infty,d_\wr)$ is coarsely coherent, it then follows by the subspace permanence property of coarse coherence that the set $\{(\Z^n,d_\wr) \colon n \in \N\}$ is in fact a coarsely coherent family. 
%That is, for any specific $\Z^n$ with this f.g. wreath metric and a short exact sequence $0 \to E' \to E \to E'' \to 0$ of $\Z^n$-filtered $R$-modules as usual, we can filter that short exact sequence by $\Z^\infty$ with the f.g. wreath metric using the inclusion of $\Z^n$ into $\Z^\infty$ (a coarse embedding with control functions the identity) to show that $E'$ is scattered with scattering constant inherited from $\Z^\infty$ and independent of $n$.}

Since the family $\{(\Z^n,d_\wr) \colon n \in \N\}$ is coarsely coherent, the family of subgroups $\{(\Z^n \rtimes \Z,d) \colon n \in \N\}$ is coarsely coherent (where $d$ is any metric that yields $d_\wr$ and $d_1$ when restricted to $\Z^n$ and $\Z$, respectively). From part (2) of Theorem \refT{MJAXC} we obtain the following result.

\begin{CorRef}{Z wreath Z is CC}
The wreath product $(\Z \wr \Z,d_\wr)$ is coarsely coherent. 
\end{CorRef}

\SecRef{Applications of the Colimit Theorem}{EA}

We view the Family Colimit Theorem as one from the collection of  permanence results for coarse coherence.  We will use other results of this type in our applications, so we want to state a summary of known permanence results.

In this section we consider generalized metric spaces where $\infty$ is a possible value of the metric.  In this setting one has metric disjoint unions.

Suppose $p \colon X \to Y$ is a uniformly expansive map of metric spaces.  Then $X$ is said to be a \textit{fibering} with base $Y$ and \textit{coarse fibers} which are preimages of metric balls in $Y$.

\begin{ThmRefName}{HGKAS}{Permanence Properties of Coarse Coherence}
The collection of coarsely coherent metric spaces is closed under the following operations:
\begin{enumerate}
    \item passage to metric subspaces,
    \item passage to commensurable metric overspaces,
    \item disjoint unions,
    \item uniformly expansive fiberings with the base and the coarse fibers in the collection.
\end{enumerate}

The subcollection of finitely generated groups with word metrics is closed under the following constructions:
\begin{enumerate} \setcounter{enumi}{4}
    \item passage to subgroups of finite index,
    \item passage to commensurable overgroups,
    \item finite semi-direct products, including finite direct products,
    \item free products with amalgamation and HNN extensions.
\end{enumerate}
\end{ThmRefName} 

\begin{proof}
Parts (5) and (6) follow from (1) and (2).
Part (4) is Theorem 2.12 of \cite{bGjG:18}. All of the remaining properties are proved in loc. cit. as a basis for stating a similar closure theorem for the coarse regular coherence property.  The proof of Theorem 4.12 of \cite{bGjG:18} gives precise references to results in that paper.
\end{proof}

For a countable discrete group with a fixed countable generating set, the \textit{word distance} $d(\gamma_1, \gamma_2)$ is defined as the minimal length of words in the given countable alphabet that represent $\gamma_1^{-1} \gamma_2$. 

Given a countable ordered generating set $A = \{ a_i \}$ for a group $\Gamma$, one may consider the sequence of subgroups $H_i$ generated by the prefix of size $i$ in $A$.  In other words, $H_i = \langle a_1, a_2, \ldots, a_i \rangle$.  One may also assume, without affecting the sequence of subgroups, that each $a_{t+1}$ is not contained in $H_t$.  This guarantees that each inclusion $H_i \to H_j$ for $i < j$ is a proper isometric embedding for the word metrics induced from the listed generating sets.

\begin{ThmRef}{VBNC}
If $\{ H_i \}$ is a coarsely coherent family, $\Gamma$ is weakly coarsely coherent.  If $\Gamma$ possesses distance-nonincreasing functions $\tau_{i} \colon \Gamma \to H_{i}$ such that $d (\gamma,\tau(\gamma)) \le d(\gamma,H_{i}) + \varepsilon_{i}$ for all $\gamma$ in $\Gamma$ and some $\varepsilon_{i} \ge 0$, then $\Gamma$ is coarsely coherent. 
\end{ThmRef}

\begin{proof}
This is precisely Theorem \refT{MJAXC} applied to the present situation.
\end{proof}

\begin{Cor} \label{NMKJ}
A countable discrete group is weakly coarsely coherent if and only if all of its finitely generated subgroups are coarsely coherent as a family. 
\end{Cor}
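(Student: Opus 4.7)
The plan is to deduce both directions of the biconditional from Theorem~\refT{VBNC}, using the observation that a countable discrete group is the directed union of its finitely generated subgroups.

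For the ``if'' direction, suppose the family $\{G_\alpha\}$ of all finitely generated subgroups of $\Gamma$ is coarsely coherent as a family. I would fix a countable ordered generating set $\{a_i\}_{i \in \N}$ of $\Gamma$, which exists because $\Gamma$ is countable, and discard any $a_{t+1}$ already contained in $\langle a_1,\ldots,a_t\rangle$. Setting $H_i = \langle a_1,\ldots,a_i\rangle$, each $H_i$ is finitely generated and hence appears in the family $\{G_\alpha\}$, and the inclusions $H_i \hookrightarrow H_j$ for $i<j$ are proper isometric embeddings as noted in the paragraph preceding Theorem~\refT{VBNC}. The uniform constants witnessing coarse coherence of $\{G_\alpha\}$ as a family immediately restrict to witness that the sub-collection $\{H_i\}$ is coarsely coherent as a family. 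The first conclusion of Theorem~\refT{VBNC} then yields that $\Gamma$ is weakly coarsely coherent.

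For the ``only if'' direction, suppose $\Gamma$ is weakly coarsely coherent, and consider a collection of exact sequences $0 \to E'_\alpha \to E_\alpha \to E''_\alpha \to 0$ of $G_\alpha$-filtered $R$-modules with uniform constants $D$, $d$, $b$. The plan is to view each sequence as one of $\Gamma$-filtered modules via the isometric inclusion $G_\alpha \hookrightarrow \Gamma$; since the inclusion is isometric, the properties of $D$-leanness, $d$-insularity, and $b$-bicontrol all transfer with the same constants. To apply weak coarse coherence of $\Gamma$, which needs the middle module to be both lean and insular, I would modify $E_\alpha$ using the $D$-thickened interleaving $\widetilde{(E_\alpha)}_D$ of Lemma~\refL{JASK2}, which is $0$-lean and $D$-interleaved with $E_\alpha$, then use $b$-bicontrol of $g_\alpha$ and $d$-insularity of $E''_\alpha$ to control the insularity of the modified $\Gamma$-filtered module. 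Weak coarse coherence of $\Gamma$ then scatters $E'_\alpha$ with a constant $\partial$ depending only on $D$, $d$, $b$, which is the required uniformity across $\alpha$.

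The main obstacle is the ``only if'' direction; specifically, arranging a $\Gamma$-filtered module that is simultaneously lean and insular and still sits in a short exact sequence with $E'_\alpha$ on the left and a suitably insular quotient on the right, all with output constants independent of $\alpha$. This is the technical heart of the argument and requires careful accounting, via the filtration-change constructions from section~2, of how leanness and insularity interact when a $G_\alpha$-filtration is pushed forward to a $\Gamma$-filtration through an isometric inclusion of a finitely generated subgroup into the ambient countable group.
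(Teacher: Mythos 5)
Your ``if'' direction is correct and is exactly the reading the paper intends: restrict the coarsely coherent family of all finitely generated subgroups to the chain of prefix subgroups $H_i$ and quote the first conclusion of Theorem \refT{VBNC}. The problem is your ``only if'' direction, which is not actually proved. Family coarse coherence (Definition \refD{CoarseCoherenceFam}) assumes only that the middle modules $E_{\alpha}$ are lean, whereas weak coarse coherence of $\Gamma$ (Definition \refD{WCCoherence}) can be invoked only when the middle module is lean \emph{and} insular. Your proposed repair --- pass to the thickened filtration of Lemma \refL{JASK2} and then ``use $b$-bicontrol of $g_{\alpha}$ and $d$-insularity of $E''_{\alpha}$ to control the insularity'' of the middle term --- does not close this: Lemma \refL{JASK2} yields leanness and interleaving only, insularity of a module cannot be inferred from insularity of its quotient (nothing is known about $E'_{\alpha}$), and Lemmas \refL{JASK11} and \refL{JASK22} are of no use here since they change the base along $\tau\colon X\to X'$ and require insularity as an input. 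You flag this step yourself as the unresolved ``technical heart,'' so the argument is incomplete precisely where the content lies.

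A second concrete weak point is the claim that the inclusion $G_{\alpha}\hookrightarrow\Gamma$ is isometric so that $D$-leanness, $d$-insularity and $b$-bicontrol ``transfer with the same constants.'' For an arbitrary finitely generated subgroup with its own word metric inside a countable group with a word metric from a countable generating set this is false in general (subgroup distortion), and the comparison constants depend on $\alpha$; since the whole point of the family statement is uniformity of the output constant over $\alpha$, this step as written undermines the conclusion you need. For comparison: the paper supplies no proof of Corollary \ref{NMKJ} at all, treating the ``if'' half as an immediate application of Theorem \refT{VBNC} and the converse as resting on a uniform subspace-restriction argument in the spirit of the passage in Section \refS{Example} (where the family $\{(\Z^n,d_\wr)\}$ is extracted from the single space $\Z^\infty$), rather than on a push-forward construction like yours; to make the converse rigorous you would have to either carry out such a uniform restriction argument or genuinely resolve the lean-plus-insular mismatch identified above.
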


\begin{Cor} \label{QWSA}
If $G$ and $H$ are countable, discrete, coarsely coherent groups, then $G \wr H$, the reduced wreath product of $G$ with $H$, is coarsely coherent.
\end{Cor}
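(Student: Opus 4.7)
The plan is to imitate the argument of Corollary~\refC{Z wreath Z is CC} in this generality, in two stages. First, establish that the base subgroup $B = \bigoplus_{h \in H} G$, equipped with the wreath metric $d_\wr$ inherited from $G \wr H$, is coarsely coherent. Second, apply the finite semi-direct product case of Theorem~\refT{HGKAS}(7) to the coarsely coherent groups $B$ and $H$ to conclude that $G \wr H = B \rtimes H$ is coarsely coherent.

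For the first stage, I would enumerate a countable generating set for $H$ and list $H = \{h_0 = 1_H, h_1, h_2, \ldots\}$. For each $n$, let
\[
B_n = \bigoplus_{i=0}^{n} G_{h_i} \cong G^{\,n+1},
\]
where $G_{h_i}$ denotes the copy of $G$ at position $h_i$ inside $B$. By the finite direct product permanence~\refT{HGKAS}(7), each $B_n$ is coarsely coherent. Then I would reproduce the argument of Lemma~\refL{Z Infinity is CC} with $\Z$ replaced by $G$: for every integer $k \geq 0$ the ball of $d_\wr$-radius $k$ about $1_B$ lies inside some $B_{n_k}$, since a word of length at most $k$ in the generators of $G \wr H$ can reach only finitely many positions $h \in H$; and the cosets $\{z \cdot B_{n_k} : z \in B\}$ give a $k$-disjoint decomposition of $B$, since any nontrivial element outside $B_{n_k}$ must place a coordinate at a far-away position. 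Invoking the main theorem of \cite{bG:20} then yields coarse coherence of $(B, d_\wr)$, exactly as in the proof of Lemma~\refL{Z Infinity is CC}; note that only the individual coarse coherence of each $B_n$ is needed, not a uniform family structure.

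The second stage is a direct application of Theorem~\refT{HGKAS}(7) to the coarsely coherent groups $(B, d_\wr)$ (from the first stage) and $H$ (by hypothesis), recognizing $G \wr H = B \rtimes H$ as a finite semi-direct product with a compatible word metric.

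The main obstacle is the geometric input underlying the first stage: one must verify that an element of $B$ whose coordinate function is supported at $h \in H$ has wreath length at least roughly $2\|h\|_H$, plus the contribution from the coordinate value in $G$. This is the generalization of the formula $\|e_i\|_\wr = 2i+1$ used in Section~3, and it is what makes the balls in $(B,d_\wr)$ eventually sit inside some $B_{n_k}$ and makes distinct cosets of $B_{n_k}$ be $k$-separated. A secondary concern is the case where $G$ or $H$ is only countable, not finitely generated; here I would first handle finitely generated subgroups, where balls in $H$ are finite and the geometric estimates are clean, and then bootstrap to the countable case via Corollary~\ref{NMKJ} and the exhaustion of $G$ and $H$ by finitely generated subgroups.
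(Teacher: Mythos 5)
Your skeleton matches the paper's proof: identify $G \wr H$ with $\bigl(\bigoplus_{h \in H} G\bigr) \rtimes H$, show the base group is coarsely coherent, and finish with semi-direct product permanence from Theorem \refT{HGKAS}. The difference is the mechanism for the base group. The paper writes $\bigoplus_{h \in H} G$ as the colimit of $\bigoplus_{h \in H_n} G$ over finitely generated subgroups $H_n \le H$ and invokes Corollary \ref{NMKJ}, i.e.\ the Family Colimit Theorem machinery; you instead propose to rerun the coset-decomposition argument of Lemma \refL{Z Infinity is CC} (balls of radius $k$ sit inside some $B_{n_k}$, distinct cosets of $B_{n_k}$ are $k$-disjoint, then the main theorem of \cite{bG:20}), whose advantage is exactly the one the paper notes after that lemma: it needs only that each finite power $G^{n}$ is coarsely coherent individually, not as a family. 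That is a legitimate alternative for this stage, and your second stage is literally the paper's closing step, so the fact that $\bigoplus_{h\in H} G$ is not finitely generated while Theorem \refT{HGKAS}(7) is stated for finitely generated groups is a looseness you share with the paper (the fibering statement (4) is the cleaner citation).

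The genuine gap is the countable, non-finitely-generated case, which the corollary explicitly includes. Your key geometric claim, that the $d_\wr$-ball of radius $k$ about $1_B$ lies in a single $B_{n_k}$, needs the ball of radius about $k$ in $H$ to be finite; for a countable infinite generating set of $H$ this fails (already the radius-one ball meets infinitely many positions), so no $B_n$ contains the ball and the $k$-disjoint coset decomposition disappears. The repair you sketch via Corollary \ref{NMKJ} does not close this: as stated, \ref{NMKJ} delivers only \emph{weak} coarse coherence, and it requires the finitely generated subgroups to be coarsely coherent \emph{as a family} --- precisely the uniform statement your direct argument was built to avoid, and which individual coherence of the pieces does not supply. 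The paper's route sidesteps the finite-ball issue from the outset by feeding the exhaustion $\bigoplus_{h \in H_n} G$ into the colimit theorem; to get full rather than weak coherence there, note that the coordinate projections $\bigoplus_{h \in H} G \to \bigoplus_{h \in H_n} G$ are distance-nonincreasing retractions satisfying the hypothesis of part (2) of Theorem \refT{MJAXC} (equivalently Theorem \refT{VBNC}), and the family condition can be obtained as in Section 3 by first establishing coherence of a single ambient space and then citing subspace permanence. So your proposal is sound, and genuinely different, for finitely generated $G$ and $H$; for the countable case you must replace the bootstrap by the colimit-theorem argument rather than the ball-and-coset argument.
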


\begin{proof}
We can identify $G \wr H$ with the semi-direct product 
\begin{equation*}
\bigg(\bigoplus_{h \in H} G\bigg) \rtimes H = \bigg(\colim{n}  \bigoplus_{h \in H_n} G \bigg) \rtimes H,
\end{equation*}
where $H_n = \langle h_1,h_2,\ldots,h_n\rangle$, the group generated by the first $n$ elements of $H$ under any fixed ordering. From Corollary \ref{NMKJ}, 
$\bigoplus_{h \in H} G$
is coarsely coherent.
Since the collection of coarsely coherent groups is closed under semi-direct products, and $H$ is coarsely coherent, the result follows.
\end{proof}

\begin{ExRef}{GHJ}
Lamplighter groups $L_n$, for all natural numbers $n$, are the restricted wreath products given as $L_n = \Z_n \wr \Z$. Both $\Z_n$ and $\Z$ are coarsely coherent, so $L_n$ is coarsely coherent from Corollary \ref{QWSA}.
\end{ExRef}

\begin{Cor}
If $G$ is a countable, discrete, coarsely coherent group and $\alpha \colon H \to K$ is an isomorphism of subgroups of $G$, then the HNN extension $G\ast_\alpha$ is coarsely coherent.
\end{Cor}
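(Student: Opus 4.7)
The plan is to parallel the proof of Corollary~\ref{QWSA}: realize $G*_\alpha$ as a colimit of finitely generated HNN extensions of finitely generated subgroups of $G$ and then invoke the Family Colimit Theorem.

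First I build a compatible filtration. Since $H\le G$ is countable, write $H=\bigcup_n H_n$ with each $H_n$ finitely generated and nested, and set $K_n=\alpha(H_n)$. Choose an ordered countable generating set for $G$ whose initial segments generate finitely generated subgroups $G_n\supseteq H_n\cup K_n$ with $G_n\subseteq G_{n+1}$ and $G=\bigcup_n G_n$. By the observation recorded just before Theorem~\refT{VBNC}, each inclusion $G_n\hookrightarrow G_{n+1}\hookrightarrow G$ is then a proper isometric embedding, so the intrinsic word metric on $G_n$ coincides with the subspace metric from $G$.

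Next I assemble the HNN extensions. The restriction $\alpha_n\coloneq\alpha|_{H_n}\colon H_n\to K_n$ is an isomorphism of finitely generated subgroups of $G_n$, so $\Gamma_n\coloneq G_n*_{\alpha_n}$ is a finitely generated HNN extension. The universal property gives $G*_\alpha=\colim_n\Gamma_n$ as groups, and with the generators of each $G_n$ together with a common stable letter $t$ the inclusions $\Gamma_n\hookrightarrow\Gamma_{n+1}\hookrightarrow G*_\alpha$ are proper isometric embeddings of word metrics. Each $G_n$ is coarsely coherent by part~(1) of Theorem~\refT{HGKAS}, as a metric subspace of the coarsely coherent space $G$; hence each $\Gamma_n$ is coarsely coherent by part~(8) of Theorem~\refT{HGKAS}. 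Tracking the constants through these two permanence steps, they depend only on fixed coarse coherence data for $G$ and on fixed HNN bookkeeping, so they may be chosen uniformly in $n$, which makes $\{\Gamma_n\}_n$ a coarsely coherent family.

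Finally, apply the Family Colimit Theorem. Part~(1) of Theorem~\refT{MJAXC} immediately yields that $G*_\alpha$ is weakly coarsely coherent. To upgrade to the full conclusion, I will construct distance-nonincreasing maps $\tau_n\colon G*_\alpha\to\Gamma_n$ satisfying $d(x,\tau_n(x))\le d(x,\Gamma_n)+\varepsilon_n$. These come from the Bass--Serre action of $G*_\alpha$ on its HNN tree $T$: the subtree $T_n$ preserved by $\Gamma_n$ admits a nearest-point projection $T\to T_n$ which, combined with a coset-representative system for $G/G_n$ chosen compatibly with the prefix filtration, lifts to a slot-wise retraction on Britton-reduced normal forms. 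The main obstacle is verifying that this combinatorial lift is indeed distance-nonincreasing for the word metric on $G*_\alpha$, since Britton reductions can shorten a word in $G*_\alpha$ without a direct counterpart inside $\Gamma_n$; once that verification goes through, part~(2) of Theorem~\refT{MJAXC} completes the proof.
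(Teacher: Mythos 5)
Your route (write $G*_\alpha$ as a colimit of finitely generated HNN extensions $\Gamma_n=G_n*_{\alpha_n}$ and feed this into Theorem \refT{MJAXC}) is genuinely different from the paper's, and as written it has two real gaps. First, to apply either part of the Family Colimit Theorem you need $\{\Gamma_n\}$ to be coarsely coherent \emph{as a family}, i.e.\ with a scattering constant $\partial$ depending only on $(D,d,b)$ and not on $n$. You assert this by ``tracking the constants'' through parts (1) and (8) of Theorem \refT{HGKAS}, but those permanence statements are proved for single spaces and are nowhere stated or proved in a uniform, family-wise form; family coarse coherence is strictly stronger than member-wise coarse coherence, so this step is unsupported. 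Note that in the $\Z\wr\Z$ argument the family statement for $\{\Z^n\}$ is obtained by exhibiting all members as subspaces of one coarsely coherent space ($\Z^\infty$); the analogous move here would require knowing $G*_\alpha$ (or some other common overspace) is coarsely coherent, which is circular. Second, your upgrade from weak to full coarse coherence rests on distance-nonincreasing retractions $\tau_n\colon G*_\alpha\to\Gamma_n$ with $d(x,\tau_n(x))\le d(x,\Gamma_n)+\varepsilon_n$, and you explicitly leave the key verification open. That is not a small loose end: nearest-point projection in the Bass--Serre tree does not descend to a $1$-Lipschitz map of word metrics in any evident way (Britton reductions can shorten words with no counterpart in $\Gamma_n$, exactly as you worry), and there is no reason to expect such retractions onto HNN subextensions in general. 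So even granting the family claim, your argument delivers only weak coarse coherence, not the stated conclusion.

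The paper avoids both problems by using a different decomposition: the kernel of the natural map $G*_\alpha\to\Z$ is an ascending union (colimit along isometric embeddings) of iterated amalgamated free products of copies of $G$ over $H\cong K$, and $G*_\alpha$ is the semidirect product of this kernel with $\Z$. Coarse coherence of the finite-stage amalgams comes from part (8) of Theorem \refT{HGKAS}, the colimit is handled by the preceding corollaries (the countable-colimit consequence of Theorem \refT{VBNC}), and the final $\Z$ factor is absorbed by semidirect-product permanence, part (7) of Theorem \refT{HGKAS}, rather than by part (2) of Theorem \refT{MJAXC}. That is precisely what buys full coarse coherence without constructing any retraction onto an HNN subgroup. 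If you want to salvage your approach, you would need either a family-uniform version of the subspace and HNN permanence theorems, or an honest construction of the maps $\tau_n$; with the tools available in this paper, the mapping-torus decomposition is the path that closes.
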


\begin{proof}
HNN extensions are constructed by taking colimits, amalgamated free products, and semi-direct products with $\Z$, so the result follows from the preceding corollaries and Theorem \refT{HGKAS}.
\end{proof}

\end{document}